\documentclass[12pt,twoside,a4paper]{article}

\usepackage[noadjust]{cite}
\usepackage{amssymb,amsmath,amsthm}
\usepackage{amssymb}

\newcommand\mytitle{Couplings and primitives on topological spaces} 

\swapnumbers
\numberwithin{equation}{section}

\newtheorem{theorem}{Theorem}[section]
\newtheorem{corollary}[theorem]{Corollary}

\theoremstyle{definition}

 \mathchardef\ordinarycolon\mathcode`\:
  \mathcode`\:=\string"8000
  \begingroup \catcode`\:=\active
    \gdef:{\mathrel{\mathop\ordinarycolon}}
  \endgroup


\def\bigscpr(#1,#2){{\left(#1\nonscript \mskip2mu plus2mu \middle| \nonscript \mskip2mu
plus2mu#2\right)}}



\renewcommand\tilde{\widetilde}
\renewcommand\hat{\widehat}

\newcommand\grad{\operatorname{grad}}

\renewcommand\phi{\varphi}

\renewcommand\epsilon{\varepsilon}

\newcommand{\R}{\mathbb{R}\nonscript\hskip.03em}

\newcommand{\K}{\mathbb{K}\nonscript\hskip.03em}
\newcommand{\Z}{\mathbb{Z}\nonscript\hskip.03em}

\newcommand\cD{\mathcal D}

\newcommand\cI{\mathcal I}
\newcommand\cP{\mathcal P}

\newcommand\dupa[2]{\left\langle #1, #2 \right\rangle}



\def\formE(#1,#2){\sum_{e\in E}\int_{a_e}^{b_e} #1_e'(x)\ol{#2_e'(x)}\,dx}


\newcommand\restrict{\vphantom f\mskip1mu\vrule\mskip2mu}
\newcommand\set[2]{\bigl\{#1{;}\;#2\bigr\}}

\newcommand\ol{\overline}

\newcommand\tmo{^{-1}}

\newcommand\pd{\partial}

\newcommand\cC{\mathcal C}
\newcommand\cU{\mkern1mu\mathcal U}
\newcommand\cV{\mkern1mu\mathcal V}
\newcommand\cS{\mkern1mu\mathcal S}

\newcommand\comp{{\textnormal c}}
\newcommand\Cci{{\displaystyle 
C_{\raise0.2ex\hbox{$\scriptstyle\comp$}}^\infty}}

\newcommand{\from}{{:}\;}
\renewcommand{\from}{\colon}

\newcommand\sse{\subseteq}

\newcommand\di{\mathclose{}\,\mathrm{d}}





\newcommand\slim{\mathop{\rm s\kern.08em\mbox{\rm -}lim}} 

\newcommand\abstracttext{\noindent
For an open covering $\cU$ of a topological space and a mapping $d\from\cI\to\K$, where $\cI:=\set{(U,V)\in\cU\times\cU}{U\cap V\ne\varnothing}$, we present conditions for the
existence of a mapping $C\from\cU\to\K$ satisfying $C_V-C_U=d_{UV}$ for all $(U,V)\in\cI$. The result is applied to a Poincar\'e type theorem concerning distributional
potentials. We also put the result into the context of algebraic topology.
\vspace{8pt}

\noindent
MSC 2010: 54C20, 46F10, 55U99
\vspace{2pt}

\noindent
Keywords: topological space, simply connected, distribution, cocycle
}
\hsize=126mm
\vsize=180mm
\parindent=5mm

\begin{document}
\title{\mytitle}

\author{J\"urgen Voigt}

\date{}

\maketitle
\begin{abstract}
\abstracttext
\end{abstract}

\section{Introduction}
\label{intro}
Let $\Omega$ be a topological space, $\cU\sse\cP(\Omega)$ an open covering of $\Omega$, and denote by 
\[
 \cI:=\set{(U,V)\in\cU\times\cU}{U\cap V\ne\varnothing}
\]
the set of intersecting pairs. A mapping $d\colon\cI\to\K$, where $\K$ denotes the field of real or complex numbers, will be called a \textbf{coupling}. A mapping $C\colon\cU\to\K$ will be called a \textbf{primitive} of $d$ if
$d_{UV}=C_V-C_U$ for all $(U,V)\in\cI$. (We write the arguments of the mappings $d$ and $C$ as subscripts.)

Clearly, if $C\colon\cU\to\K$ is any mapping, then $C$ is a primitive of the coupling $d\colon\cI\to\K$ given by
\[
 d_{UV}:=C_V-C_U\qquad((U,V)\in\cI).
\]
Note that this coupling $d$ has the properties
\begin{align}\label{eq-cc}
\begin{split}
d_{UV}= -d_{VU}\qquad&((U,V)\in\cI),\\
d_{UW}=d_{UV}+d_{VW}\qquad&(U,V,W\in\cU,\ U\cap V\cap W\ne\varnothing).
\end{split}
\end{align}
This means that the properties \eqref{eq-cc} are minimal requirements for the existence of a 
primitive for a coupling.

\begin{theorem}\label{thm-main}
Let $\Omega$ be a simply connected topological space, and let $\cU$ be a covering of\/ $\Omega$ consisting of path-connected open sets. Let $d\colon\cI\to\K$ be a coupling
satisfying the compatibility conditions \eqref{eq-cc}. Then there exists a primitive~$C$ of~$d$.
\end{theorem}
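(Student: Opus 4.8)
The plan is to realize $d$ as a \v{C}ech $1$-cocycle and exhibit it as a coboundary by ``integrating'' it along paths, using simple connectivity to kill the resulting ambiguity. First I attach a scalar to combinatorial data: call a finite sequence $(W_0,\dots,W_n)$ in $\cU$ a chain if $W_{i-1}\cap W_i\ne\varnothing$ for each $i$, and assign it the value $\sum_{i=1}^n d_{W_{i-1}W_i}$. The compatibility conditions \eqref{eq-cc} make this value invariant under two elementary moves: deleting a repeated set (note $d_{WW}=0$ follows from \eqref{eq-cc}), and inserting a set $Z$ between $W_{i-1}$ and $W_i$ whenever $W_{i-1}\cap Z\cap W_i\ne\varnothing$, since then $d_{W_{i-1}W_i}=d_{W_{i-1}Z}+d_{ZW_i}$ is exactly the second identity in \eqref{eq-cc}.

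Next I pass from chains to paths. Given a path $\gamma\colon[0,1]\to\Omega$ and sets $U\ni\gamma(0)$, $V\ni\gamma(1)$, I apply the Lebesgue number lemma to the open cover $\set{\gamma^{-1}(W)}{W\in\cU}$ of the compact interval to obtain a partition $0=t_0<\dots<t_n=1$ and sets $W_i\in\cU$ with $\gamma([t_{i-1},t_i])\subseteq W_i$; then $(U,W_1,\dots,W_n,V)$ is a chain, and I define $d(U,\gamma,V)$ as its value. The first substantial point is that this is independent of all choices: for a fixed partition, independence of the $W_i$ follows by replacing them one at a time, using the two cocycle relations available at the endpoints $\gamma(t_{i-1}),\gamma(t_i)$ of each subinterval; independence of the partition follows by passing to a common refinement, which only inserts repeated sets. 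From this I record the three properties I will need downstream: concatenation, $d(U,\gamma_1*\gamma_2,W)=d(U,\gamma_1,Z)+d(Z,\gamma_2,W)$ for any $Z$ containing the join point; change of endpoint set, $d(U',\gamma,V)=d_{U'U}+d(U,\gamma,V)$ whenever $U,U'\ni\gamma(0)$; and vanishing, $d(U,\gamma,U)=0$ whenever $\gamma([0,1])\subseteq U$ (use the one-set chain).

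The decisive step is homotopy invariance: if $\gamma_0,\gamma_1$ are paths homotopic relative to endpoints, then $d(U,\gamma_0,V)=d(U,\gamma_1,V)$. Given a homotopy $H\colon[0,1]^2\to\Omega$ with fixed endpoints, I apply the Lebesgue number lemma to $\set{H^{-1}(W)}{W\in\cU}$ to obtain a grid $0=t_0<\dots<t_n=1$, $0=s_0<\dots<s_m=1$ and sets $W_{ij}$ with $H([t_{i-1},t_i]\times[s_{j-1},s_j])\subseteq W_{ij}$. The key observation is that for each row $j$ the \emph{single} chain $(U,W_{1j},\dots,W_{nj},V)$ is subordinate simultaneously to the lower edge $H(\cdot,s_{j-1})$ and the upper edge $H(\cdot,s_j)$, because both horizontal edges of each subsquare lie in $W_{ij}$ and the grid corners supply the required overlaps; hence these two restricted paths receive the very same value, and running $j$ from $1$ to $m$ yields $d(U,\gamma_0,V)=d(U,\gamma_1,V)$. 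This is the only place where simple connectivity enters, and I expect the careful Lebesgue-number bookkeeping (in particular verifying that the indicated chain really is subordinate to both edges, corners included) to be the main technical obstacle.

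Finally I build the primitive. Fixing a base point $x_0$ (using that a simply connected space is path-connected) and a set $U_0\ni x_0$, I choose for each $U\in\cU$ a point $x_U\in U$ and a path $\gamma_U$ from $x_0$ to $x_U$, and set $C_U:=d(U_0,\gamma_U,U)$; homotopy invariance together with simple connectivity (any two such paths are homotopic rel endpoints) makes $C_U$ independent of the choice of $\gamma_U$. To verify $C_V-C_U=d_{UV}$ for $U\cap V\ne\varnothing$, I pick $z\in U\cap V$ and use path-connectedness of the members of $\cU$ to choose a path $\alpha$ in $U$ from $x_U$ to $z$ and a path $\beta$ in $V$ from $z$ to $x_V$. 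Since $\gamma_U*\alpha*\beta$ joins $x_0$ to $x_V$, homotopy invariance gives $C_V=d(U_0,\gamma_U*\alpha*\beta,V)$; splitting this by concatenation at $x_U$ and at $z$ with intermediate set $U$ in both cases, and using vanishing for $\alpha\subseteq U$ together with $d(U,\beta,V)=d_{UV}+d(V,\beta,V)=d_{UV}$ for $\beta\subseteq V$, I obtain $C_V=C_U+0+d_{UV}$, which is the required identity.
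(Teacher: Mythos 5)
Your proposal is correct, and its overall route coincides with the paper's: both assign to each path a telescoping sum of coupling values along a subordinate chain of covering sets, prove independence of the subdivision and of the chosen sets via the two identities in \eqref{eq-cc} (replacing one set at a time, refining the partition by inserting repeated sets), reduce the whole problem to homotopy invariance, and finally define $C_U$ by transporting the value along a path from a fixed base point. The one place where you genuinely diverge is the proof of homotopy invariance, which is the key lemma. The paper argues that $s\mapsto C_{U,x,\Gamma(\cdot,s)}$ is locally constant --- for $s$ near a given $s_0$ the same partition and the same sets remain admissible, by compactness of the subintervals --- and concludes by connectedness of $[0,1]$. You instead apply the Lebesgue number lemma to the whole square to get a grid of subsquares, each mapped into a single $W_{ij}$, and observe that the row-$j$ chain $(U,W_{1j},\dots,W_{nj},V)$ is simultaneously subordinate to the lower and upper horizontal edges of that row, so consecutive horizontal slices receive equal values; this is equally valid, slightly heavier on bookkeeping (one must check the overlaps supplied by the vertical grid edges), but more explicitly combinatorial, replacing the connectedness argument by a finite chain of equalities. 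A second, minor difference: the paper shows $C_{U,x}$ is independent of the endpoint $x\in U$ by prolonging the path inside $U$, whereas you simply fix one reference point $x_U$ per $U$; that suffices, since your verification of $C_V-C_U=d_{UV}$ uses only concatenation, the change-of-endpoint-set identity, and vanishing on paths contained in a single member of $\cU$.
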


This theorem will be proved in Section~\ref{sec-pf-thm}. In order to motivate the result we indicate the following two applications.

\begin{corollary}\label{cor-1}
Let $\Omega$ and $\cU$ be as in Theorem~\ref{thm-main}. For each $U\in\cU$ let $f_U \colon U\to\K$ be a function, and suppose that for all $(U,V)\in\cI$ the function
$f_V-f_U$ is constant on $U\cap V$. 
Then there exists a function $f\colon\Omega\to\K$ such that $f_U-f$ is constant on $U$ for all $U\in \cU$.
\end{corollary}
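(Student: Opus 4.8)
The plan is to derive the corollary from Theorem~\ref{thm-main} by encoding the overlap differences of the $f_U$ into a coupling. For each pair $(U,V)\in\cI$ the set $U\cap V$ is nonempty, and by hypothesis $f_V-f_U$ is constant on it; I denote this constant value by $d_{UV}$. This defines a map $d\colon\cI\to\K$, and the whole task reduces to showing that $d$ satisfies the compatibility conditions~\eqref{eq-cc}, since a primitive of $d$ will then supply exactly the constant shifts needed to glue the $f_U$ together into a single function.

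Next I would verify~\eqref{eq-cc}. The antisymmetry $d_{UV}=-d_{VU}$ is immediate from $f_U-f_V=-(f_V-f_U)$ on $U\cap V=V\cap U$. For the cocycle relation, let $U,V,W\in\cU$ with $U\cap V\cap W\ne\varnothing$ and pick a point $x$ in this intersection; note that then $(U,V),(V,W),(U,W)\in\cI$, so all three values are defined. Evaluating the pointwise identity $f_W-f_U=(f_V-f_U)+(f_W-f_V)$ at $x$ gives $d_{UW}=d_{UV}+d_{VW}$, because each difference agrees with the corresponding constant on the relevant pairwise intersection. Hence $d$ satisfies~\eqref{eq-cc}, and Theorem~\ref{thm-main} yields a primitive $C\colon\cU\to\K$ with $d_{UV}=C_V-C_U$ for all $(U,V)\in\cI$.

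Finally I would construct $f$ by the prescription $f(x):=f_U(x)-C_U$ for $x\in U$. Since $\cU$ covers $\Omega$, this defines $f$ on all of $\Omega$, provided the value is independent of the chosen $U$. To check consistency, let $x\in U\cap V$; then the required equality $f_U(x)-C_U=f_V(x)-C_V$ is equivalent to $f_V(x)-f_U(x)=C_V-C_U$, which holds because the left-hand side equals $d_{UV}$ and $C$ is a primitive of $d$. Thus $f$ is well defined, and $f_U-f=C_U$ is constant on $U$ for every $U\in\cU$, as required.

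As for the main obstacle: the real content lies entirely in Theorem~\ref{thm-main}, so the corollary itself is a short reduction. The only points demanding care are that $d$ is genuinely well defined (which uses that each $U\cap V$ is nonempty, so the constant is unambiguous) and that the gluing of the $f_U$ is consistent across overlaps; both follow cleanly once~\eqref{eq-cc} and the primitive property are in hand.
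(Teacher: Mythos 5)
Your proposal is correct and follows essentially the same route as the paper: define $d_{UV}$ as the constant value of $f_V-f_U$ on $U\cap V$, check the compatibility conditions \eqref{eq-cc} pointwise on the relevant intersections, invoke Theorem~\ref{thm-main} to obtain a primitive $C$, and glue via $f:=f_U-C_U$ on $U$, with the same well-definedness check. No gaps.
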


\begin{corollary}\label{cor-2}
Let $\Omega\sse\R^n$ be a simply connected open set, and let $\cU$ be a covering of $\Omega$ consisting of path-connected 
open sets. For each $U\in\cU$ let $F_U\in\cD'(U)$ 
be a distribution. Suppose that for all $(U,V)\in\cI$ the restriction $F_V-F_U\in\cD'(U\cap V)$ to 
$\cD(U\cap V)$ is generated by a constant function. Then there exist
a mapping $C\colon\cU\to\K$ and a distribution $F\in\cD'(\Omega)$ such that $F_U=F+C_U$ on $\cD(U)$ for all $U\in\cU$.
\end{corollary}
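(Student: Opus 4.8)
The plan is to reduce the statement to Theorem~\ref{thm-main} by extracting a coupling from the distributions and then to recover the global distribution $F$ from the sheaf property of $\cD'$.

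First, for each $(U,V)\in\cI$ let $d_{UV}\in\K$ be the constant value of the distribution $F_V-F_U$ on $U\cap V$; this is well defined because a constant function on the nonempty open set $U\cap V$ determines its value. This defines a coupling $d\colon\cI\to\K$, and I would check that it satisfies \eqref{eq-cc}. Antisymmetry $d_{UV}=-d_{VU}$ is immediate from $F_U-F_V=-(F_V-F_U)$ on $V\cap U=U\cap V$. For the cocycle condition, suppose $U\cap V\cap W\ne\varnothing$. On this nonempty open set the identity $F_W-F_U=(F_W-F_V)+(F_V-F_U)$ holds in $\cD'(U\cap V\cap W)$; restricting the three constant distributions $F_W-F_U$, $F_W-F_V$, $F_V-F_U$ to it and comparing constants yields $d_{UW}=d_{VW}+d_{UV}$. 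Here I use that $U\cap V\cap W\ne\varnothing$ forces $U\cap W\ne\varnothing$, so that $d_{UW}$ is defined, and that restriction to a nonempty open subset preserves the value of a constant.

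By Theorem~\ref{thm-main}, since $\Omega$ is simply connected and $\cU$ consists of path-connected open sets, there is a primitive $C\colon\cU\to\K$ with $C_V-C_U=d_{UV}$ for all $(U,V)\in\cI$. Now set $G_U:=F_U-C_U\in\cD'(U)$ for each $U\in\cU$. For $(U,V)\in\cI$ one computes on $U\cap V$ that $G_V-G_U=(F_V-F_U)-(C_V-C_U)=d_{UV}-d_{UV}=0$, so the family $(G_U)_{U\in\cU}$ consists of distributions agreeing on all pairwise overlaps.

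It remains to glue this compatible family into a single distribution. Since $\cD'$ is a sheaf on $\Omega$, the agreement of the $G_U$ on overlaps yields a unique $F\in\cD'(\Omega)$ with $F=G_U$ on $\cD(U)$ for every $U\in\cU$; concretely $F$ can be produced as $\sum_U\chi_U G_U$ for a locally finite smooth partition of unity $(\chi_U)$ subordinate to $\cU$ (with $\spt\chi_U\sse U$, each term extended by $0$ off $U$), the compatibility of the $G_U$ on overlaps being exactly what makes $\langle F,\phi\rangle=\langle G_U,\phi\rangle$ hold for $\phi\in\cD(U)$. Then $F_U=F+C_U$ on $\cD(U)$ for all $U$, which is the assertion. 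I expect the only genuine work to be this gluing step; the verification of \eqref{eq-cc} and the reduction to Theorem~\ref{thm-main} are formal, while the gluing is the standard paracompactness-based sheaf property of distributions rather than anything particular to the present setting.
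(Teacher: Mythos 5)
Your proof is correct and follows essentially the same route as the paper: extract the constants $d_{UV}$, verify the compatibility conditions \eqref{eq-cc} (the paper delegates this to the computation in the proof of Corollary~\ref{cor-1}), invoke Theorem~\ref{thm-main} for the primitive $C$, and glue the consistent family $(F_U-C_U)_{U\in\cU}$ via a partition of unity. Your explicit remark that restricting a constant distribution to a nonempty open subset preserves its value is a point the paper leaves implicit, but the argument is the same.
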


Proofs of these corollaries will be provided in Section~\ref{sec-pf-cor}.
A proof of Corollary~\ref{cor-2}, in a special context, is given in \cite[pp.\,528--532]{Mardare2008}, and our proof of Theorem~\ref{thm-main} is inspired by this proof. 
Applying the ideas of \cite{Mardare2008} in order to prove Corollary~\ref{cor-1}, the author realised that the proofs of Corollaries~\ref{cor-1} and~\ref{cor-2} became rather similar, and this 
was the motivation for finding a context, now stated in Theorem~\ref{thm-main}, from which both corollaries can be derived.

In Section~\ref{sec-com} we sketch the application of Corollary~\ref{cor-2} in the proof of a distributional version of Poincar\'e's lemma.

In Section~\ref{sec-at} we outline how Theorem~\ref{thm-main} fits into the context of algebraic topology.

\section{Existence of primitives for couplings}
\label{sec-pf-thm}

\begin{proof}[Proof of Theorem~\ref{thm-main}]
 Let $U_0\in\cU$ and $x_0\in U_0$ be fixed throughout this proof.
Let $U\in\cU$, $x\in U$ and a path $\gamma\colon[0,1]\to\Omega$ connecting $x_0$ with
$x$, i.e.\ $\gamma$ is continuous, and $\gamma(0)=x_0$, $\gamma(1)=x$, be fixed throughout parts (i)--(iii) 
of the proof.
%

(i) As $\bigl(\gamma\tmo(V)\bigr)_{V\in\cU}$ is an open covering of $[0,1]$, there exist 
$t_0:=0<t_1<\dots<t_{n-1}<1=:t_n$ and $U_1,\dots,U_n\in\cU$ such that 
\begin{equation}\label{eq-3}
\gamma([t_{k-1},t_k])\sse U_k\qquad(k=1,\dots,n).
\end{equation}
Put
\begin{equation}\label{eq-4}
 C_{U,x,\gamma,t_1,\dots,t_{n-1}}:=\sum_{k=0}^n d_{U_kU_{k+1}},
\end{equation}
with the convention $U_{n+1}:=U$.
We are going to show that $C_{U,x,\gamma,t_1,\dots,t_{n-1}}$ does not depend on the choice of the sets $U_1,\dots,U_n$ and the intermediate points $t_1,\dots,t_{n-1}$. 

First, replacing one of the sets $U_k$ by another set $V_k\in\cU$ containing $\gamma([t_{k-1},t_k])$, we apply \eqref{eq-cc}
and obtain
\[
 d_{U_{k-1}V_k}+d_{V_kU_{k+1}} = d_{U_{k-1}U_k}+d_{U_kV_k}+d_{V_kU_k}+d_{U_kU_{k+1}}
 = d_{U_{k-1}U_k}+d_{U_kU_{k+1}}.
\]
This implies that $C_{U,x,\gamma,t_1,\dots,t_{n-1}}$ does not depend on the choice of the sets $U_k$ satisfying~\eqref{eq-3}.

Next suppose that we include an additional intermediate point $s\in(0,1)$, say $t_{k-1}<s<t_k$, for some 
$k\in\{1,\dots,n\}$. Then, observing that $\gamma([t_{k-1},s])\sse U_k$ and $\gamma([s,t_k])\sse U_k$, the 
sum in \eqref{eq-4} corresponding to the  intermediate points $t_0<\dots <t_{k-1}<s<t_k<\dots t_n$ will just contain the additional term $d_{U_kU_k}=0$, hence is unchanged.

\sloppy
Now let $s_0=0<s_1<\dots<s_{m-1}<1=s_{m}$ together with sets 
$V_1,\dots,V_m\in\cU$ satisfy the conditions
analogous to \eqref{eq-3}, and let $C_{U,x,\gamma,s_1,\dots,s_{m-1}}$ denote the corresponding sum \eqref{eq-4}. Then we can augment
one by one the original intermediate points to the set containing all the points 
$t_1,\dots,t_{n-1},s_1,\dots,s_{m-1}$, without changing the value of the sum in \eqref{eq-4}. Since we can do
the same, starting with the intermediate points $0<s_1<\dots s_{m-1}<1$, we conclude that 
\[
C_{U,x,\gamma,s_1,\dots,s_{m-1}}=C_{U,x,\gamma,t_1,\dots,t_{n-1}}=:C_{U,x,\gamma}.
\]

\fussy
(ii) Let 
$\tilde\gamma\from[0,1]\to\Omega$ 
be another path connecting $x_0$ with $x$. By hypothesis, there exists an FEP-homotopy 
$\Gamma\from[0,1]^2\to\Omega$ 
connecting $\gamma$ with $\tilde\gamma$, i.e.\ $\Gamma$ is continuous, $\gamma=\Gamma(\cdot,0)$, 
$\tilde\gamma=\Gamma(\cdot,1)$, $\Gamma(0,\cdot)=x_0$, $\Gamma(1,\cdot)=x$. We show that 
$C_{U,x,\tilde\gamma}=C_{U,x,\gamma}$. 

To show this, let $s_0\in[0,1]$. Then for the path $\Gamma(\cdot,s_0)$ there exist intermediate points 
$0<t_1<\dots <t_{n-1}<1$ and sets $U_1,\dots,U_n\in\cU$ satisfying the properties analogous to \eqref{eq-3}, and
then
\[
 C_{U,x,\Gamma(\cdot,s_0)}=C_{U,x,\Gamma(\cdot,s_0),t_1,\dots,t_{n-1}}=\sum_{k=0}^n d_{U_kU_{k+1}}.
\]
The continuity of $\Gamma$ implies that the same intermediate points, with the same sets $U_1,\dots,U_n$ can be
taken for paths $\Gamma(\cdot,s)$ with the homotopy parameter $s$ in a neighbourhood of $s_0$. This shows that the function 
$[0,1]\ni s\mapsto C_{U,x,\Gamma(\cdot,s)}$ is locally constant, hence constant, and
\[
 C_{U,x,\tilde\gamma}=C_{U,x,\Gamma(\cdot,1)}=C_{U,x,\Gamma(\cdot,0)}=C_{U,x,\gamma}=:C_{U,x}.
\]

(iii) 
Let $y\in U$. As $U$ is path-connected by hypothesis, there exists a path $\gamma_1\from[1,2]\to U$
connecting $x$ with $y$. We combine $\gamma$ with $\gamma_1$ to a path $\hat\gamma$ connecting $x_0$ with $y$. Evaluating $C_{U,x}$ and
$C_{U,y}$ by the expression \eqref{eq-3}, we see that the sums only differ by the last term in the expression for 
$C_{U,y}$, given by $d_{UU}=0$. We conclude that
\[
 C_{U,x} = C_{U,y} =:C_U.
\]

(iv) It remains to show that $C$ is a primitive of $d$, i.e.\ $d_{UV}=C_V-C_U$ for all $(U,V)\in\cI$.
Let $x\in U\cap V$, $y\in V$. Then an argument analogous to the previous step (iii) yields 
$C_V=C_{V,y,\hat\gamma}=C_{U,x,\gamma}+d_{UV}=C_U+d_{UV}$. 
\end{proof}

\section{Proofs of the corollaries}
\label{sec-pf-cor}

\begin{proof}[Proof of Corollary~\ref{cor-1}]
For all $(U,V)\in\cI$ let $d_{UV}$ be the constant satisfying $f_V=f_U+d_{UV}$ on $U\cap V$. Then clearly $d_{VU}=-d_{UV}$.
If $U,V,W\in\cU$ are such that $U\cap V\cap W\ne\varnothing$, then $f_V=f_U+d_{UV}$ and 
$f_W=f_V+d_{VW}=f_U+d_{UV}+d_{VW}$ on $U\cap V\cap W$. Since also $f_W=f_U+d_{UW}$ on 
$U\cap W\supseteq U\cap V\cap W$, one obtains $d_{UW}=d_{UV}+d_{VW}$. Hence Theorem~\ref{thm-main} implies that
there exists a primitive $C$ of $d$. Then the function $f\from\Omega\to\K$, given by
\[
 f\restrict_U:=f_U-C_U\qquad(U\in\cU)
\]
is well-defined. Indeed, if $(U,V)\in\cI$, then on $U\cap V$ one obtains
\[
 f_V-C_V=(f_U+d_{UV})-(C_U+d_{UV})=f_U-C_U. \qedhere
\]
\end{proof}

\begin{proof}[Proof of Corollary~\ref{cor-2}]
For all $(U,V)\in\cI$ let $d_{UV}$ be the constant satisfying $F_V=F_U+d_{UV}$ on $\cD(U\cap V)$. (Here the
constant $d_{UV}$ stands for the distribution generated by the constant function $d_{UV}$. More explicitly, the
equality $F_V=F_U+d_{UV}$ means that
\[
 \dupa {F_V}\phi=\dupa {F_U}\phi+\int_{U\cap V}d_{UV}\phi(x)\di x
\]
for all $\phi\in\cD(U\cap V)$.) As in the proof of Corollary~\ref{cor-1} one shows that $(d_{UV})_{(U,V)\in\cI}$ 
is a coupling satisfying \eqref{eq-cc}, and for the primitive $C$ of $d$ resulting from Theorem~\ref{thm-main} 
one concludes that $(F_U-C_U)_{U\in\cU}$ is a consistent family of distributions, i.e.
$F_U-C_U=F_V-C_V$ on $\cD(U\cap V)$ for all $(U,V)\in\cI$. It is a standard fact from the theory of 
distributions that -- using a suitable partition of unity -- one can then construct a distribution 
$F\in\cD'(\Omega)$ satisfying $F=F_U-C_U$ on $\cD(U)$ for all $U\in\cU$.
\end{proof}

\section{A distributional version of Poincar\'e's lemma}
\label{sec-com}

The present note was motivated by the following version of Poincar\'e's lemma; see \cite[Theorem~2.1]{Mardare2008}, \cite[Theorem~2.1]{Voigt2022}.

\begin{theorem}\label{thm-poinc-distr}
Let $\Omega\sse\R^n$ be a simply connected open set, and let $G=(G_1,\dots,G_n)\in\cD'(\Omega)^n$ be such that
\begin{equation}\label{eq-cc-poinc}
\pd_jG_k=\pd_kG_j\qquad (j,k\in\{1,\dots,n\}). 
\end{equation}
Then there exists $F\in\cD'(\Omega)$ such that 
$\grad F=G$.
\end{theorem}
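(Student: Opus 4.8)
The plan is to reduce the global statement to its local (convex) version and then to glue the local primitives by means of Corollary~\ref{cor-2}. First I would fix the covering $\cU$ to be the family of all open balls $U\subseteq\Omega$. This is a covering of $\Omega$ by path-connected open sets, as required by Corollary~\ref{cor-2}, and it has the decisive additional feature that the intersection $U\cap V$ of any two members is again convex, hence connected. This connectedness of the pairwise intersections is exactly what will allow me to verify the hypothesis of Corollary~\ref{cor-2}.

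The analytic core is the local step: on each ball $U\in\cU$ I would produce a distribution $F_U\in\cD'(U)$ with $\grad F_U=G\restrict_U$. On a convex (indeed star-shaped) set this is the classical distributional Poincar\'e lemma, and the compatibility condition \eqref{eq-cc-poinc} is precisely what makes the closed distributional $1$-form $G$ exact there. If one does not wish to quote it, the standard route is to mollify: for small $\eps>0$ the regularisation $G^\eps$ is smooth and still satisfies $\pd_jG^\eps_k=\pd_kG^\eps_j$, so the classical Poincar\'e lemma yields a smooth primitive $F_U^\eps$ on a slightly smaller ball, normalised by its value at the centre; letting $\eps\to0$ and using that a primitive is determined up to an additive constant, the $F_U^\eps$ converge in $\cD'$ to the desired $F_U$.

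Finally I would verify the hypothesis of Corollary~\ref{cor-2} and conclude. For $(U,V)\in\cI$ one has $\grad(F_V-F_U)=G\restrict_{U\cap V}-G\restrict_{U\cap V}=0$ on $U\cap V$; since a distribution on a connected open set with vanishing gradient is generated by a constant function, and $U\cap V$ is connected, $F_V-F_U$ is such a constant on $\cD(U\cap V)$. Corollary~\ref{cor-2} then supplies a map $C\colon\cU\to\K$ and a distribution $F\in\cD'(\Omega)$ with $F_U=F+C_U$ on $\cD(U)$ for every $U$. Consequently, because the gradient of a constant vanishes, $\grad F_U-\grad C_U=\grad F_U=G\restrict_U$ on each $U$; as $F\restrict_U=F_U-C_U$ this gives $\grad(F\restrict_U)=G\restrict_U$, and since the gradient is a local operator and the balls cover $\Omega$ we obtain $\grad F=G$ on all of $\Omega$. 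I expect the main obstacle to be the local step: establishing the convex-domain distributional Poincar\'e lemma together with the elementary but essential fact that a zero-gradient distribution on a connected open set is a constant; once these are in hand, Corollary~\ref{cor-2} performs the gluing for free.
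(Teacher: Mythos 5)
Your proposal follows essentially the same route as the paper: cover $\Omega$ by convex open sets (you use balls, the paper uses open rectangles, which is immaterial), invoke the convex-domain distributional Poincar\'e lemma for the local primitives $F_U$, observe that $\grad(F_V-F_U)=0$ on the connected set $U\cap V$ forces $F_V-F_U$ to be a constant, and glue via Corollary~\ref{cor-2}. The paper likewise cites the local step to the literature rather than proving it, so your argument is correct and matches the intended proof.
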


The first step in the proof is to show that the theorem holds if $\Omega$ is an open rectangle $\prod_{j=1}^n(a_j,b_j)$; see 
\cite[pp.\,526--528]{Mardare2008} or \cite[part (i) of the proof of Theorem~2.1]{Voigt2022}. (In the latter reference it is shown for more general convex open sets.)

For each open rectangle $U\sse\Omega$ let $F_U\in\cD'(U)$ be a distribution satisfying
$\grad F_U=G$ on $\cD(U)$. If $U,V\sse\Omega$ are open rectangles satisfying $U\cap V\ne\varnothing$, then 
$\grad(F_V-F_U)=G-G=0$ on $\cD(U\cap V)$, and this implies that there exists $d_{UV}\in\K$ such that 
$F_V-F_U=d_{UV}$ on $\cD(U\cap V)$.
Choosing a covering $\cU$ of $\Omega$ by open rectangles one concludes that for each $(U,V)\in\cI$ there exists 
$d_{UV}$ as described above, and Corollary~\ref{cor-2} can be applied to provide $F$ as asserted in Theorem~\ref{thm-poinc-distr}. 

\section{Context algebraic topology}
\label{sec-at}

The author is indebted to Friedrich Martin Schneider (TU Bergakademie Freiberg) for pointing out how Theorem~\ref{thm-main} relates to algebraic topology and in fact
can be proved in this context. We refer to \cite[Section~3.1]{Hatcher2002} for the notions of algebraic topology used below.

\sloppy
With the covering $\cU$ of $\Omega$ one associates the \emph{nerve} (or \emph{nerve complex})
\[
 N(\cU):=\set{\cV\in\cP_{\mathrm{fin}}(\cU)}{\bigcap\cV\ne\varnothing}
\]
(where $\cP_{\mathrm{fin}}(\cU)$ stands for the collection of finite subsets of $\cU$). For this simplicial complex $\cS:=N(\cU)$ (over the set $\cU$) one considers the cochain
complex $\mathcal{C}^{\ast}(\mathcal{S},\mathbb{K}) := \bigl((C^*_n(\cS,\K))_{n\in\Z},(\pd^*_n)_{n\in\Z}\bigr)$, where $\pd^*_n\colon C^*_{n-1}(\cS,\K)\to C^*_n(\cS,\K)$ for $n\in\Z$ (and $C^*_n(\cS,\K)=\{0\}$ 
for $n\in\{-2,-3,\dots\}$). The hypothesis that $\Omega$ is simply connected and 
that (its open covering) $\mathcal{U}$ consists of path-connected sets implies (via 
standard techniques, more precisely~\cite[Theorem~3.2 (p.\,195) and 
Theorem~2A.1 (p.\,166)]{Hatcher2002} and~\cite[Theorem~8]{DeyMemoliWang2017}) that the cohomology group $H_1(\cC^*(\mathcal S,\K))$ is trivial, and this means that the set
$B_1(\cC^*(\mathcal S,\K))=\pd^*_1(C_0^*(\cS,\K))$ of $1$-coboundaries coincides with the set $Z_1(\cC^*(\mathcal S,\K))=\ker(\pd^*_2)$ of $1$-cocycles.

\fussy
Now, a coupling $d$ satisfying the compatibility conditions \eqref{eq-cc} can be interpreted as a $1$-cocycle $g$. A corresponding element $f\in C^*_0(\cS,\K)$ satisfying
$\pd^*_1f = g$ can then be interpreted as a primitive $C$ of $d$.

{\frenchspacing

}
\bigskip

\noindent
J\"urgen Voigt\\
Technische Universit\"at Dresden\\
Fakult\"at Mathematik\\
01062 Dresden, Germany\\
{\tt juergen.voigt@tu-dresden.de
}

\end{document}